\newtheorem{remark}{Remark}
\newtheorem{theorem}{Theorem}
\newtheorem{proposition}{Proposition}
\newtheorem{corollary}{Corollary}
\newtheorem{lemma}{Lemma}
\newtheorem{example}{Example}
\newcommand{\me}{\mathbb{E}}
\newcommand{\mn}{\mathbb{N}}
\newcommand{\mr }{ \mathbb{R}}
\newcommand{\R}{\mathbb{R}}
\newcommand{\Prob}{\mathbf P}
\newcommand{\F}{\mathcal F}
\newcommand{\g}{\mathfrak g}
\newcommand{\h}{\mathfrak h}
\newcommand{\C}{\mathcal C}
\newcommand{\mw}{\mathbf{w}}
\keywords{ Stochastic Calculus, Brownian Motion, Invariant measure, Harmonic measure, Foliations\\
\indent 2010 {\it Mathematics Subject Classification.  Primary:58J65, 60J60, ; Secondary:53C12, 60J70, 60J65}}
\title{Invariance of 0-currents under diffusions.}
\author{ Diego S. Ledesma }
\address{Departamento de
 Matem\'{a}tica, Universidade Estadual de Campinas \\ 13.081-970 -
 Campinas - SP, Brazil. \\ E-mail: dledesma@ime.unicamp.br}
 \author{Fabiano B. da Silva}
\address{Departamento de Matem\'{a}tica, Universidade Estadual
Paulista J\'{u}lio de Mesquita Filho \\ Bauru - SP, Brazil. \\ E-mail: fabiano@fc.unesp.br}
\begin{document}
\maketitle
\begin{abstract}
We study two actions of a stochastic flow $\varphi_t$  on the space of $0-$currents $T$ of a differentiable manifold $M$. In particular, we give conditions on a current $T$ to be invariant under these actions. Also, we apply our results to the case of $0-$currents that come from harmonic measures associated to certain type of foliations on homogeneous spaces.
\end{abstract}

\section{Introduction.}
In dynamical systems some evolution problems can be related to the evolution of geometrical currents over a differentiable manifold. For example, consider the following situation: let $D$ be a domain in $\mr^n$ and suppose that we want to study the evolution of the volume of $D$ under a flow $\varphi_t:\mr^n\rightarrow \mr^n$. This problem can be studied from different points of views. In \cite{mcdonald}, Kinateder and McDonald used the fact that $D_t=\varphi_t(D)$ is a stochastic process on a Frechet manifold, and developed a stochastic calculus for that setup to study the behavior of the volume $vol(D_t)$. 

Another approach to the same problem, can be given by a simple use of the change of variables formula. In fact, we observe that 
\[
 vol(D_t)=\int_{D_t}~dx=\int_D\varphi^*_tdx.
\]
Therefore, the study of the evolution of $vol(D_t)$ turns into the study of the evolution of the geometric current $T_t$ induced by the stochastic $n-$form $\varphi_t^*dx$  on $\mathbb{R}^n.$ The main advantage of making this is that we change a structure of a Frechet manifold by a vector space. Then, our object of study is a stochastic process  $T_t$ on a vector space.

In this work we  study the natural class of stochastic processes on the space of currents which are induced by stochastic processes in the base space. Our main set up is the following: let $(\Omega,\mathcal{F},\mathbb{P})$ be a probability space, let $\varphi_t$ be a stochastic flow on a differentiable manifold $M$, and let $C^\infty(M)$ be the set of smooth real-valued functions $f : M \rightarrow \mr$ with compact supports.  We denote by $\C^0(M)$ to the space of $0-$currents, i.e. the space of linear functionals $T:C^\infty(M)\rightarrow \mr$. The flow $\varphi_t$ acts naturally on $\C^0(M)$ by
\[
 \varphi_t^*T(f)=T(f\circ \varphi_t),
\]
for every $f\in C^\infty(M)$. We observe that $\C^0(M)$ is an infinite dimensional vector space and that $\varphi_t^*T$ is a random variable on $\C^0(M)$.
Thus, it is possible to take means and we are able to define another action on the space of currents by
\[
 (\varphi_t\star T)(f)=\me[\varphi_t^*T(f)].
\]
Inspired in the definition above we say that $\varphi_t\star T$ is the mean of the random variable
$\varphi_t^*T$.

A current $T$ will be called invariant under $\varphi_t$ if \[\varphi_t^*T=T,\] and  invariant in mean by $\varphi_t$ if
\[
  (\varphi_t\star T)(f)=T(f),\hspace{1cm}\forall f\in C^\infty(M).
\]
When the current $T$ is defined in terms of a borel measure $\mu$ on $M$,
 i.e.
 \[
  T(f)=\int_Mf~d\mu,
 \]
for every $f\in C^\infty(M)$, the definition of invariance under
$\varphi_t$ coincides with the usual definition of the measure $\mu$ being invariant
under $\varphi_t$. Also, for the case of borel measures, the notion of invariance in mean is equivalent
to the invariance of $\mu$ under the heat semigroup generated by
$\varphi_t$ (see e.g. Klieman \cite{klieman})

It is easy to see that a invariant current is invariant in mean. In fact, for an invariant current $T$ we have
\begin{eqnarray*}
 \me[\varphi_t^*T(f)]&=&\me[T(f)]\\
 &=&T(f).
\end{eqnarray*}
But the converse is hard to happen. We will give
conditions that characterize the currents that are invariant and invariant in
mean under diffusions. In particular, we will study the case of $0-$currents given
by volume forms as a natural consequence of its theory and, as an application, we will specialize on the particular case of harmonic measures of foliations over homogeneous spaces. 

The article is organized as follows: in Section 2, we study the action of a diffusion over a current $T$ and obtain conditions to guarantee the different types of invariance. In Section 3, we work in the specific case of currents defined by $n-$forms. Particularly, we study the problem discussed above but now on the set up of domains in Riemannian manifolds . Finally, in Section 4, we apply the above results to the case of foliations on a homogeneous space.

\section{Invariance under stochastic flow generated by SDE. }

Here, we start our study of the actions over a current $T$ of a diffusion defined by a SDE on $M$. In particular, we obtain conditions to guarantee the different types of invariance. Our main tools are the convergence theorems of de Rham (see \cite{derham} pg 68) and the It\^o formula for the flow of a SDE given by Kunita in \cite[pg 263]{Kunita-1}.

Let $(M,g)$ be a compact Riemannian manifold. Consider a stochastic flow $\varphi_t$ generated by a Stratonovich SDE on a manifold $M$:
 \begin{eqnarray}
 dx_t&=&\sum_{i=0}^mX_i(x_t)~\circ dB^i_t\label{EDE1}\\
x_0&=&x,\nonumber
\end{eqnarray}
where $B^0_t = t$, $ (B^1_t, \ldots , B^m_t)$ is a Brownian motion
in $\R^m$ constructed on a filtered probability space $(\Omega,
\F, \F_t, \Prob)$ and $X_0, X_1, \ldots , X_m$ are smooth vector
fields over $M$. We assume that there exist a stochastic solution
flow of diffeomorphisms $\varphi_t:\mr\times
M\times \Omega \rightarrow M$. In particular, when $M$ is compact the flow solution always exists, see e.g. Elworthy \cite{Elworthy} or Ikeda and
Watanabe \cite{IW} among others.
\begin{proposition}
 Let $T$ be a $0-$current. Then
 \begin{eqnarray*}
  T\left(\int_0^tf(\varphi_t)~_\circ dB_t\right)&=&\int_0^tT\left(f(\varphi_t)\right)~_\circ dB_t.
  %
 \end{eqnarray*}

\end{proposition}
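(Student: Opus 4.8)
The plan is to read the identity as a stochastic Fubini theorem: we want to interchange the $0$-current $T$ with the Stratonovich integral. Both sides will be exhibited as limits in probability of the \emph{same} Riemann sums --- on the left after applying $T$ and passing to the limit in $C^\infty(M)$, on the right directly in $\mr$ --- so the proof reduces to a limiting argument resting on the linearity and (sequential) continuity of $T$ together with the regularity in $x$ of the solution flow $\varphi_t$.

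First I would fix a partition $\pi=\{0=t_0<t_1<\cdots<t_N=t\}$ of $[0,t]$ and set
\[
S_\pi(x)=\sum_{k=0}^{N-1}\frac{f(\varphi_{t_k}(x))+f(\varphi_{t_{k+1}}(x))}{2}\,\bigl(B_{t_{k+1}}-B_{t_k}\bigr),
\]
understood componentwise in $i=0,\dots,m$ (for $i=0$, where $B^0_t=t$, the corresponding term is an ordinary Riemann sum). Since $\varphi_{t_k}$ is for each $k$ a diffeomorphism of $M$, each $f\circ\varphi_{t_k}$ lies in $C^\infty(M)$, so $S_\pi$ is, almost surely, a finite linear combination of elements of $C^\infty(M)$ with random coefficients. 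By linearity of $T$,
\[
T(S_\pi)=\sum_{k=0}^{N-1}\frac{T(f(\varphi_{t_k}))+T(f(\varphi_{t_{k+1}}))}{2}\,\bigl(B_{t_{k+1}}-B_{t_k}\bigr),
\]
which is precisely the Riemann sum approximating $\int_0^t T\bigl(f(\varphi_s)\bigr)~_\circ dB_s$.

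It then remains to pass to the limit $|\pi|\to 0$ in both displays. On the one hand, by Kunita's It\^o formula the map $s\mapsto f\circ\varphi_s$ is a $C^\infty(M)$-valued semimartingale, hence $s\mapsto T(f(\varphi_s))$ is a real continuous semimartingale, and by the very definition of the Stratonovich integral $T(S_\pi)\to\int_0^t T(f(\varphi_s))~_\circ dB_s$ in probability. On the other hand, again by Kunita's regularity results for solution flows and for stochastic integrals depending smoothly on a parameter, the map $x\mapsto\int_0^t f(\varphi_s(x))~_\circ dB_s$ defines (a.s.) an element of $C^\infty(M)$, and $S_\pi$ converges to it in probability in the Fr\'echet topology of $C^\infty(M)$, i.e. uniformly on the compact manifold $M$ together with all derivatives. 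Since $T$ is a current, it is continuous, so $T(S_\pi)\to T\bigl(\int_0^t f(\varphi_s)~_\circ dB_s\bigr)$ in probability; here de Rham's convergence theorem is what guarantees that the sequential continuity of $T$ on $C^\infty(M)$ is enough. Uniqueness of limits in probability then yields the claimed equality.

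The main obstacle is the second convergence: controlling $S_\pi-\int_0^t f(\varphi_s)~_\circ dB_s$ simultaneously in every $C^k$-seminorm. Concretely, one covers $M$ by finitely many charts and, using the moment estimates for $\varphi_t$ and all of its spatial derivatives furnished by Kunita's theory, derives uniform-in-$x$ $L^p$ bounds for the differences of the Riemann sums and of their derivatives; a Kolmogorov-type argument then upgrades these into convergence in the $C^\infty$ topology. It is convenient to first split the Stratonovich integral into an It\^o integral plus the bounded-variation correction --- for the latter the interchange with $T$ is the elementary Fubini argument for ordinary integrals --- since the It\^o part can then be estimated directly with the Burkholder--Davis--Gundy inequalities.
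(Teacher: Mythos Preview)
Your argument is correct in outline, but it proceeds quite differently from the paper. You fix the current $T$ and approximate the Stratonovich integral by Riemann sums $S_\pi$, then rely on the continuity of $T$ and on $C^\infty(M)$-convergence of $S_\pi$ (via Kunita's moment bounds and a Kolmogorov argument) to pass to the limit. The paper does the opposite: it fixes the test function and approximates the current. By de~Rham's regularization theorem there are smooth top-forms $\nu_n$ with $T_n(f)=\int_M f\,\nu_n\to T(f)$ for every $f\in C^\infty(M)$; for each $T_n$ the identity is the classical (deterministic) Fubini theorem interchanging $\int_M(\cdot)\,\nu_n$ with the stochastic integral, and one then lets $n\to\infty$. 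The paper's route avoids your main obstacle entirely --- no $C^\infty$-topology estimate on $S_\pi$ is needed --- at the price of a different (and easier) limiting step, namely passing $\lim_n$ through a stochastic integral of real-valued integrands. One small remark: the de~Rham result you invoke for ``sequential continuity of $T$'' is not the one the paper uses; continuity of a current on $C^\infty(M)$ is built into the definition, whereas the relevant de~Rham theorem here is the approximation of currents by smooth forms.
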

\begin{proof} We first observe that $\int_0^tf(\varphi_t)~_\circ dB_t$, 
is a random function on $M$. By a theorem of de Rham (see \cite{derham} pg 68) we have that there are smooth n-forms $\{\nu_n\}_{n\in \mn}$ such that the currents $T_{n}$, defined by
\[
 T_n(f)=\int_Mf~\nu_n,
\]
satysfy $T_n(f)\rightarrow T(f)$ for every $f\in C^\infty(M).$
Since
 \begin{eqnarray*}
  T_n\left(\int_0^tf(\varphi_t)~_\circ dB_t\right)&=&\int_0^tT_n\left(f(\varphi_t)\right)~_\circ dB_t
  %
 \end{eqnarray*}
 for each $n$, by Fubini's theorem, we have that
\begin{eqnarray*}
 T\left(\int_0^tf(\varphi_t)~_\circ dB_t\right)&=&\lim_{n\rightarrow \infty}T_n\left(\int_0^tf(\varphi_t)~_\circ dB_t\right)\\
 &=&\lim_{n\rightarrow \infty}\int_0^tT_n\left(f(\varphi_t)\right)~_\circ dB_t\\
 &=&\int_0^tT\left(f(\varphi_t)\right)~_\circ dB_t.
\end{eqnarray*}

\end{proof}

In a similar way, we obtain  
\[T\left(\int_0^tf(\varphi_t)~ dB_t\right)=\int_0^tT\left(f(\varphi_t)\right)~ dB_t,\] and  
\[T\left(\int_0^tf(\varphi_t)~dt\right)=\int_0^tT\left(f(\varphi_t)\right)~dt.\]

Given a vector field $X$ over $M$ and a $0-$current $T$ the derivative $XT$ is a $0-$current such that
\[
 XT(f)=-T(Xf),
\]
cf. de Rham \cite{derham} pg 46.
\begin{theorem}
 A $0-$current $T$ is invariant under $\varphi_t$ if and only if \[X_iT=0,\] for all $i=0,\ldots ,m$.
 A $0-$current $T$ is invariant in mean under $\varphi_t$ if and only if
\[\left(X_0 -\frac{1}{2}\sum_{i=1}^m X_i^2\right)T=0.\]
\end{theorem}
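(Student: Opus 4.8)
The plan is to use the Itô formula for the flow of an SDE, as quoted from Kunita, together with the Proposition just proved that lets currents commute with stochastic integrals. For a smooth compactly supported $f$, Kunita's formula gives
\[
f(\varphi_t(x)) = f(x) + \sum_{i=0}^m \int_0^t (X_i f)(\varphi_s(x))~\circ dB^i_s ,
\]
or in Itô form
\[
f(\varphi_t(x)) = f(x) + \int_0^t \left(X_0 f - \tfrac12\sum_{i=1}^m X_i^2 f\right)(\varphi_s(x))~ds + \sum_{i=1}^m \int_0^t (X_i f)(\varphi_s(x))~dB^i_s .
\]
Applying the current $T$ to both sides and using the Proposition (and its Itô analogue stated right after it) to pull $T$ inside the integrals, one gets an identity for $\varphi_t^*T(f) = T(f\circ\varphi_t)$ in terms of $T(f)$ and integrals of $T$ applied to $(X_i f)\circ\varphi_s$ and $(X_0 f - \tfrac12\sum X_i^2 f)\circ\varphi_s$.

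For the \emph{invariance} statement, the ``if'' direction is the easy one: if $X_i T = 0$ for all $i$, then $T(X_i f) = 0$ for every smooth $f$ and every $i$; applying this with $f$ replaced by $(X_{i_1}\cdots X_{i_k} f)\circ$(intermediate flow maps) kills every term in the iterated Stratonovich expansion, so the Stratonovich identity above collapses to $T(f\circ\varphi_t) = T(f)$, i.e. $\varphi_t^*T = T$. For the ``only if'' direction I would differentiate the relation $\varphi_t^*T(f) = T(f)$ in $t$: since this holds identically in $t$ as an equality of (random) real numbers, the bounded-variation and martingale parts of the right-hand-side semimartingale decomposition must each vanish. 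The vanishing of the $dt$-part forces $T\big((X_0-\tfrac12\sum X_i^2)(f\circ\varphi_s)\big)=0$ and the vanishing of each $dB^i$-part forces $T\big((X_i f)\circ\varphi_s\big) = 0$ for a.e.\ $s$, hence at $s=0$ by continuity, giving $T(X_i f) = 0$, i.e. $X_i T = 0$; one then checks that $X_0 T = 0$ follows as well once the $X_i T$ for $i\geq 1$ vanish.

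For the \emph{invariance in mean} statement, apply $\me$ to the Itô form of Kunita's formula after applying $T$. The Itô stochastic integral terms $\int_0^t T((X_i f)\circ\varphi_s)~dB^i_s$ are martingales (using compactness of $M$ and smoothness to get the needed integrability), so they have zero expectation, and by Fubini
\[
(\varphi_t\star T)(f) = T(f) + \int_0^t \me\!\left[\,T\Big(\big(X_0 - \tfrac12\textstyle\sum_{i=1}^m X_i^2\big)(f\circ\varphi_s)\Big)\right] ds .
\]
If $\big(X_0 - \tfrac12\sum X_i^2\big)T = 0$, then the integrand is $0$ (apply the hypothesis with test function $f\circ\varphi_s$, which is again smooth with compact support), so $(\varphi_t\star T)(f) = T(f)$ for all $t$ and all $f$. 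Conversely, if $(\varphi_t\star T)(f) = T(f)$ for all $t$, differentiate at $t=0$: the derivative of the integral is the integrand at $s=0$, namely $T\big((X_0-\tfrac12\sum X_i^2)f\big) = -\big(X_0 - \tfrac12\sum X_i^2\big)T(f)$, which must therefore vanish for all $f$, giving $\big(X_0 - \tfrac12\sum X_i^2\big)T = 0$.

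The main technical obstacle is justifying the interchange of $T$ with the stochastic and ordinary integrals at the level needed here, and the differentiation-in-$t$ / martingale-decomposition step: the Proposition handles the interchange, but one must make sure the processes $s\mapsto (X_i f)\circ\varphi_s$ and $s \mapsto (X_0 - \tfrac12\sum X_i^2)(f\circ\varphi_s)$ are the kind of smooth, compactly supported random functions to which the de Rham approximation argument of the Proposition applies, and that the resulting real-valued semimartingale $t \mapsto T(f\circ\varphi_t)$ genuinely has the decomposition obtained by applying $T$ termwise (so that uniqueness of the Doob--Meyer / semimartingale decomposition can be invoked to read off the vanishing of each piece). Compactness of $M$ is what makes the integrability and the de Rham approximation uniform enough for all of this to go through.
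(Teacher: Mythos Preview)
Your overall strategy is sound, but there is a genuine gap in the ``if'' directions, and it stems from using the \emph{forward} flow formula rather than the \emph{inverse} flow formula that the paper uses.

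In the forward Stratonovich/Itô expansion you wrote, the integrands are $(X_i f)\circ\varphi_s$: the vector field hits $f$ first, and only then is the result composed with $\varphi_s$. The hypothesis $X_iT=0$ says $T(X_ig)=0$ for every test function $g$; it does \emph{not} say that $T\big((X_if)\circ\varphi_s\big)=0$, because $(X_if)\circ\varphi_s$ is not of the form $X_ig$ for any $g$ (unless $\varphi_s$ commutes with $X_i$, which is exactly what is at stake). Your ``iterated Stratonovich expansion'' does not close this: each iteration produces new terms of the same problematic shape $(X_{i_1}\cdots X_{i_k}f)\circ\varphi_s$, and you are left with an infinite expansion whose convergence you never address. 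The same conflation appears in the mean-invariance argument: you write ``apply the hypothesis with test function $f\circ\varphi_s$'', but the integrand coming from the forward formula is $(Lf)\circ\varphi_s$, not $L(f\circ\varphi_s)$, and these differ. (Incidentally, the generator is $X_0+\tfrac12\sum X_i^2$, not $X_0-\tfrac12\sum X_i^2$; the minus sign in the theorem comes from the sign convention $XT(f)=-T(Xf)$.)

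The paper avoids this entirely by applying Kunita's Itô formula for the \emph{inverse} flow $\varphi_t^{-1}$ (Kunita, p.~263). There the integrands are $X_i(f\circ\varphi_s^{-1})$ and $X_i^2(f\circ\varphi_s^{-1})$: the vector field differentiates the full composite function. Consequently $T\big(X_i(f\circ\varphi_s^{-1})\big)=-X_iT(f\circ\varphi_s^{-1})$, which vanishes immediately from $X_iT=0$, with no iteration and no circularity; likewise for the mean case. One then passes from $\varphi_t^{-1*}T=T$ to $\varphi_t^{*}T=T$ trivially. Your ``only if'' directions via the semimartingale decomposition at $s=0$ are fine and match the paper; it is only the ``if'' directions that need the inverse-flow device (or, alternatively, a careful Gronwall/uniqueness argument for the infinite linear system $Z_t^f=\sum_i\int_0^t Z_s^{X_if}\circ dB_s^i$, which you did not supply).
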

\begin{proof}
 By It\^o formula for $\varphi_t^{-1}$ (see \cite[pg 263]{Kunita-1}), we obtain that
 \begin{eqnarray*}
  \varphi_t^{-1*}T(f)&=&T(f\circ \varphi_t^{-1})\\
  &=&T \bigg( f- \sum_{i=1}^m \int_0^tX_i(f\circ \varphi_s^{-1})~ dB^i_s - \int_0^tX_0(f\circ\varphi_s^{-1})~
  ds \\ 
& &  +\int_0^t \frac{1}{2} \sum_{i=1}^m X_i^2(f\circ\varphi_s^{-1})~ds \bigg)\\
  &=&T(f)-\sum_{i=1}^m \int_0^tT(X_i(f\circ \varphi_s^{-1}))~dB^i_s - \int_0^tT(X_0(f\circ\varphi_s^{-1}))ds \\
  & & +\int_0^t\frac{1}{2} \sum_{i=1}^m T(X_i^2(f\circ\varphi_s^{-1}))~ds.
 \end{eqnarray*}
Thus, if  $\varphi_t^*T(f)=T(f)$ then \[\varphi_t^{-1*}T(f)=T(f),\] and therefore $X_iT=0$ for $i=0, \ldots, m$. On the other hand, if $T(X_if)=0$ then $\varphi_t^{-1*}T(f)=T(f)$, and therefore $\varphi_t^*T(f)=T(f)$ for every $f$.

To see the other case, we observe that 
\begin{eqnarray*}
  \me[\varphi_t^{-1*}T(f)] &=&T(f)+0+\int_0^t\me\left[T\left(- X_0  (f\circ\varphi_s^{-1}) + \frac{1}{2} \sum_{i=1}^m X_i^2(f\circ \varphi_s^{-1})\right)\right]~ds.
 \end{eqnarray*}
Thus, if $\left( X_0 -\frac{1}{2}\sum_{i=1}^m X_i^2\right)T=0$ we obtain that $\me[\varphi_t^{-1*}T(f)]=T(f)$. Then,
\begin{eqnarray*}
 \me[\varphi_t^*T(f)]&=&\me[ T(f\circ\varphi_t)]\\
 &=&\me[\me[\varphi_t^{-1*}T(f\circ \varphi_t)]]\\
 &=&T(f).
\end{eqnarray*}
Analogously, if $\me[\varphi_t^*T(f)]=T(f)$ then $\me[\varphi_t^{-1*}T(f)]=T(f)$, and therefore \[\left(X_0- \frac{1}{2} \sum_{i=1}^m X_i^2\right)T=0.\]

%
%

\end{proof}

\begin{remark}
                 Although we are dealing here with $0-$currents it is easy to see that the same kind of calculations can be done for $k-$currents with $0<k\leq n$, since the main tools used here, the de Rham theorems and Kunita's formulas, remain valid for differential (n-k)-forms, with Lie derivatives, instead of the natural vector field derivative on functions.                  
                 In particular, Ustunel in \cite{ustunel} obtained the formula above, for the case of $n-$currents in $\mr^n$, on the space of distributions on $\mr^n$. 
                \end{remark}

\section{Currents defined by a n-forms.}

General results on $0-$currents can be difficult to be obtained if we have no information about its structure. When a $0-$current is defined by an $n-$form we can apply the differential calculus to study it. In this section we will see that the invariance of a $0-$current defined by an $n-$form can be characterized in terms of usual differential operators.

Consider, over an orientable compact Riemannian $n-$manifold $(M,g)$,
the volume form $\mu_g$ defined by the metric $g$. It is well
known that any other $n-$form $\nu$ can be written as $\nu=f\mu_g$
for a smooth function $f$.

The volume form allows us to introduce the concept of divergence of a vector field and Jacobian of a transformation in the following way. The divergence of the vector field $X$ with respect to $\mu$ is the function $div_{\mu}(X)$ such that
\[
 L_X\mu=(div_\mu(X))~\mu.
\]
The Jacobian of a diffeomorphism $\phi$ is the smooth function
$J_\mu\phi$ such that
\[
 \phi^*\mu=J_\mu\phi~\mu.
\]
In particular, when we consider the diffeomorphism
$\varphi_t$ arising from the stochastic
case, we observe that
\begin{eqnarray*}
 \varphi_t^{-1*}T_{\mu_g}(f)&=&\int_Mf\circ\varphi_t^{-1}~\mu_g\\
 &=&\int_{M}f~\varphi_t^{*}\mu_g\\
 &=&\int_Mf~J\varphi_t~\mu_g\\
 &=&T_{\mu_g}(J\varphi_t~f).
\end{eqnarray*}
So, the evolution of $ \varphi_t^{-1*}T_{\mu_g}$ can be studied through the Jacobian of $\varphi_t$. In
particular, if $J\varphi_t=1$ then the current $T_{\mu_g}$ is
invariant under $\varphi_t.$

It is well known (see \cite{Kunita-1},
Theorem 2.1 and Theorem 4.2) that the following formula holds:
\begin{eqnarray*}
\varphi_t^*\omega-\omega&=& \sum_{i=0}^m
\int_0^t\varphi_s^*L_{X_i}\omega~_\circ dB_s^i.
\end{eqnarray*}


\begin{lemma}
The Jacobian $J_\mu\varphi_t(x)$ satisfy the formula
\begin{eqnarray}
 J_\mu\varphi_t(x)&=& 1+\sum_{i=0}^m \int_0^t   ~divX_i(\varphi_s(x)) ~ (J_\mu\varphi_s(x)) \circ dB^i_s \label{EDEdiv}.
\end{eqnarray}
\end{lemma}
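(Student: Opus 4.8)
The plan is to apply the stochastic pullback formula for differential forms recalled just above the statement, to the particular $n$-form $\omega=\mu$, and then to rewrite everything in terms of densities relative to $\mu$. The point is that $\mu$ is a fixed, deterministic, nowhere-vanishing $n$-form, so dividing by it turns an identity between $n$-form-valued semimartingales into the scalar identity we want.

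First I would recall the two defining identities: $\varphi_t^*\mu=(J_\mu\varphi_t)\,\mu$ and $L_{X_i}\mu=(div X_i)\,\mu$. Substituting $\omega=\mu$ in
\[
\varphi_t^*\omega-\omega=\sum_{i=0}^m\int_0^t\varphi_s^*L_{X_i}\omega\,{}_\circ dB_s^i,
\]
the left-hand side becomes $(J_\mu\varphi_t-1)\,\mu$. For the integrand on the right, naturality of the pullback ($\phi^*(g\,\omega)=(g\circ\phi)\,\phi^*\omega$ for a function $g$) gives
\[
\varphi_s^*L_{X_i}\mu=\varphi_s^*\big((div X_i)\,\mu\big)=(div X_i\circ\varphi_s)\,\varphi_s^*\mu=(div X_i\circ\varphi_s)(J_\mu\varphi_s)\,\mu.
\]
Hence
\[
(J_\mu\varphi_t-1)\,\mu=\sum_{i=0}^m\int_0^t(div X_i\circ\varphi_s)(J_\mu\varphi_s)\,\mu\,{}_\circ dB_s^i.
\]
The final step is to cancel the fixed $n$-form $\mu$ on both sides. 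Since $\mu$ is deterministic and $t$-independent, contraction of each $\Lambda^nT^*M$-valued process with a fixed nonzero element of $\Lambda^nT_xM$ passes through the Stratonovich integral — this is the form-valued analogue of the identity in the Proposition, where the role of the linear functional $T$ is now played by evaluation at $x$ against a fixed volume element. Reading off the resulting scalar coefficients yields exactly
\[
J_\mu\varphi_t(x)=1+\sum_{i=0}^m\int_0^t div X_i(\varphi_s(x))\,(J_\mu\varphi_s(x))\,{}_\circ dB^i_s.
\]
On the compact manifold $M$ each $div X_i$ is bounded and $J_\mu\varphi_s$ is a continuous positive semimartingale, so all integrals are well defined and no integrability issue arises.

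The main (and really only) obstacle is justifying this cancellation rigorously: one must argue that an identity between $n$-form-valued semimartingales can be checked pointwise against a frame, i.e. that contraction with a fixed deterministic multivector commutes with the Stratonovich integral. Everything else is a direct substitution into the already-established form of Itô's formula for $\varphi_t^*\omega$ together with the definitions of $J_\mu\varphi_t$ and $div X_i$.
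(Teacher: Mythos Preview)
Your proof is correct and follows essentially the same route as the paper: apply the Kunita pullback formula to $\omega=\mu$, use $L_{X_i}\mu=(\mathrm{div}\,X_i)\mu$ and $\varphi_s^*(g\mu)=(g\circ\varphi_s)(J_\mu\varphi_s)\mu$, and then factor out the fixed volume form $\mu$. Your extra paragraph on why the cancellation of $\mu$ is legitimate is more explicit than what the paper writes, but the underlying argument is the same.
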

\begin{proof}
 \begin{eqnarray*}
  J_{\mu_g}\varphi_t~\mu_g-\mu_g&=&\varphi_t^*\mu_g-\mu_g\\
  &=&\sum_{i=0}^m\int_0^t\varphi_s^*L_{X_i}\mu_g~_\circ dB_s^i\\
  &=&\sum_{i=0}^m\int_0^t\varphi_s^*(div_{\mu_g}(X_i)\mu_g)~_\circ dB_s^i\\
  &=&\sum_{i=0}^m\int_0^tdiv_{\mu_g}(X_i)\circ \varphi_s~\varphi_s^*\mu_g~_\circ dB_s^i\\
  &=&\sum_{i=0}^m\int_0^tdiv_{\mu_g}(X_i)\circ \varphi_s ~J_{\mu_g}\varphi_s~\mu_g~_\circ dB_s^i\\
  &=&\left(\sum_{i=0}^m\int_0^tdiv_{\mu_g}(X_i)\circ \varphi_s ~J_{\mu_g}\varphi_s~_\circ dB_s^i\right)~\mu_g.
 \end{eqnarray*}
Therefore,
 \begin{eqnarray*}
  J_{\mu_g}\varphi_t-1&=&\left(\sum_{i=0}^m\int_0^tdiv_{\mu_g}(X_i)\circ \varphi_s ~J_{\mu_g}\varphi_s~_\circ dB_s^i\right).
 \end{eqnarray*}

\end{proof}

Thus, we see that if $div_{\mu_g}(X_i)=0$ then $T_{\mu_g}$ is invariant under $\varphi_t$. This result can be generalized as in the following theorem.

\begin{theorem} Let $T_\nu$ be a $0-$current defined by a $n-$form $\nu=f\mu_g$.  Then $T_\nu$ is invariant under $\varphi_t$  if and only if
\[div_{\mu_g}(f X_i)=0, \]for $i=0,..,m$.
\end{theorem}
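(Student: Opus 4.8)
The plan is to combine the characterization of invariance already established in Theorem~1 (a $0$-current $T$ is invariant under $\varphi_t$ iff $X_iT=0$ for all $i$) with an integration by parts on the compact, closed manifold $M$, turning the algebraic condition $X_iT_\nu=0$ into the differential condition $div_{\mu_g}(fX_i)=0$. So by Theorem~1 it suffices to prove, for each fixed $i\in\{0,\dots,m\}$, that $X_iT_\nu=0$ if and only if $div_{\mu_g}(fX_i)=0$.

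For this I would compute $X_iT_\nu$ explicitly. Since $\nu=f\mu_g$, we have $T_\nu(g)=\int_M g\,\nu=\int_M gf\,\mu_g$ for all $g\in C^\infty(M)$, and by the definition of the derivative of a current,
\[
 X_iT_\nu(g)=-T_\nu(X_ig)=-\int_M (X_ig)\,f\,\mu_g .
\]
Now I would use the Leibniz rule $div_{\mu_g}(gfX_i)=(fX_i)(g)+g\,div_{\mu_g}(fX_i)=f\,(X_ig)+g\,div_{\mu_g}(fX_i)$ together with the divergence theorem $\int_M div_{\mu_g}(Y)\,\mu_g=\int_M L_Y\mu_g=\int_M d\,\iota_Y\mu_g=0$, which holds for every vector field $Y$ precisely because $M$ is compact without boundary. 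Applied to $Y=gfX_i$ this gives $\int_M f\,(X_ig)\,\mu_g=-\int_M g\,div_{\mu_g}(fX_i)\,\mu_g$, hence
\[
 X_iT_\nu(g)=\int_M g\,\big(div_{\mu_g}(fX_i)\big)\,\mu_g=T_{\mu_g}\big(g\,div_{\mu_g}(fX_i)\big) ,
\]
i.e.\ $X_iT_\nu$ is exactly the $0$-current determined by the $n$-form $div_{\mu_g}(fX_i)\,\mu_g$.

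To finish, I would invoke nondegeneracy of the integration pairing: $div_{\mu_g}(fX_i)$ is smooth on the compact manifold $M$, so testing the last display against $g=div_{\mu_g}(fX_i)$ shows that $X_iT_\nu=0$ forces $\int_M \big(div_{\mu_g}(fX_i)\big)^2\mu_g=0$, hence $div_{\mu_g}(fX_i)\equiv 0$; conversely $div_{\mu_g}(fX_i)=0$ makes the display vanish, so $X_iT_\nu=0$. Combined with Theorem~1 this yields the stated equivalence. The computation is essentially routine; the only points needing care are the sign convention $X_iT(g)=-T(X_ig)$ and the use of the closedness of $M$ to kill the boundary term. As a consistency check one may also obtain the ``if'' direction directly from the formula $\varphi_t^*\nu-\nu=\sum_{i=0}^m\int_0^t\varphi_s^*L_{X_i}\nu~_\circ dB_s^i$ recalled above, because $L_{X_i}\nu=L_{X_i}(f\mu_g)=\big(X_if+f\,div_{\mu_g}(X_i)\big)\mu_g=div_{\mu_g}(fX_i)\,\mu_g$, so the hypothesis makes each integrand vanish, whence $\varphi_t^*\nu=\nu$ and $T_\nu$ is invariant.
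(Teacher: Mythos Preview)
Your proposal is correct and follows essentially the same approach as the paper: reduce to the algebraic criterion $X_iT_\nu=0$ via Theorem~1, then integrate by parts on the closed manifold $M$ to identify $X_iT_\nu$ with the current given by $div_{\mu_g}(fX_i)\,\mu_g$. The only cosmetic difference is that the paper routes the integration by parts through $L_{X_i}\nu=div_\nu(X_i)\,\nu$ and the identities $div_\nu(X_i)=div_{\mu_g}(X_i)+X_if/f$, $div_{\mu_g}(fX_i)=f\,div_{\mu_g}(X_i)+X_if$, whereas you apply the Leibniz rule for $div_{\mu_g}$ directly; your explicit nondegeneracy argument and the final consistency check are additions, not departures.
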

\begin{proof}
As we saw before, $T_\nu$ is invariant under $\varphi_t$  if and
only if $X_iT_\nu=0$ for $i=0,..., m$. For any
smooth function $h$, we have that
\begin{eqnarray*}
 X_iT_\nu(h)&=&-T_\nu(X_ih)\\
 &=&-\int_MX_ih~\nu\\
 &=&\int_M h~L_{X_i}\nu,
\end{eqnarray*}
where we used Stokes's theorem and Cartan's formula for $L_X.$
Since,
\begin{eqnarray*}
 div_{\nu}(X_i)&=&div_{\mu_g}(X_i)+\frac{X_if}{f},\\
 div_{\mu_g}(fX_i)&=&f~div_{\mu_g}(X_i)+X_if,
\end{eqnarray*}
we obtain that
\begin{eqnarray*}
 X_iT_\nu(h)&=&\int_Mh~L_{X_i}\nu\\
 &=&\int_Mh~div_\nu(X_i)~\nu\\
 &=&\int_Mh\left(fdiv_{\mu_g}(X_i)+X_if\right)~\nu \\
 &=&\int_Mh~div_{\mu_g}(fX_i)~\nu.
\end{eqnarray*}
Thus, $X_iT_\nu = 0$ iff  $div_{\mu_g}(fX_i)=0$.
\end{proof}

\begin{corollary}
The current $T_{\mu_g}$ is invariant under $\varphi_t$  if and
only if $div_{\mu_g}(X_i)=0$, for $i=0,...,m$.
\end{corollary}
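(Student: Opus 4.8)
The plan is to read this off from the preceding Theorem by taking $f \equiv 1$. Since $M$ is compact, the constant function $1$ lies in $C^\infty(M)$, and $\mu_g = 1 \cdot \mu_g$ is precisely a current of the form $T_\nu$ with $\nu = f\mu_g$, $f \equiv 1$. The Theorem then asserts that $T_{\mu_g}$ is invariant under $\varphi_t$ if and only if $div_{\mu_g}(1\cdot X_i) = 0$ for $i = 0,\ldots,m$; and the Leibniz-type identity $div_{\mu_g}(fX_i) = f\,div_{\mu_g}(X_i) + X_i f$, specialized to $f \equiv 1$ (so $X_i f = 0$), reduces this to $div_{\mu_g}(X_i) = 0$. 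That is the assertion.

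It is worth also recording the direct argument, which makes the corollary self-contained relative to Section~3. For the ``if'' direction, assume $div_{\mu_g}(X_i) = 0$ for all $i$; substituting into the Jacobian formula \eqref{EDEdiv} of the Lemma shows that $t \mapsto J_{\mu_g}\varphi_t(x)$ solves $J_{\mu_g}\varphi_t(x) = 1$, hence $J_{\mu_g}\varphi_t \equiv 1$ and $\varphi_t^*\mu_g = \mu_g$; consequently $\varphi_t^{-1*}T_{\mu_g}(f) = T_{\mu_g}(J\varphi_t\, f) = T_{\mu_g}(f)$, so $T_{\mu_g}$ is invariant. For the ``only if'' direction, invariance of $T_{\mu_g}$ forces $X_i T_{\mu_g} = 0$ for all $i$ by the Theorem of Section~2; and by Stokes' theorem together with Cartan's formula, $X_i T_{\mu_g}(h) = \int_M h\, L_{X_i}\mu_g = \int_M h\, div_{\mu_g}(X_i)\, \mu_g$ for every $h \in C^\infty(M)$, which forces $div_{\mu_g}(X_i) = 0$.

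I expect no real obstacle here: all of the analytic content --- the de Rham approximation, Kunita's It\^o formula for $\varphi_t^*\omega$, and the divergence bookkeeping --- has already been established in the Lemma and the two Theorems above, so the corollary is a pure specialization. The only point requiring a word of care is that the constant function $1$ is an admissible test function in $C^\infty(M)$, which is exactly where compactness of $M$ is used.
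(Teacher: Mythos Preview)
Your proposal is correct and matches the paper's approach: the corollary is stated without proof in the paper, being an immediate specialization of the preceding theorem with $f\equiv 1$, exactly as in your first paragraph. Your additional self-contained argument is correct but goes beyond what the paper provides.
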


\begin{example}
Let $M^{2n}$ be a smooth manifold endowed with a symplectic
form $\mw$, and $h_i:M
\rightarrow \R$, $i=0,1,...,k$, be a smooth functions. Let
$X_{h_i}$ the Hamiltonian vector field associated to $h_i$, for
each $i=0,1,...,k$, i.e. the unique vector field that $i_{X_{h_i}}
\mw = dh_i$. Supposing that $M$ is compact, or at least that
each $X_{h_i}$ is complete, let $\phi_t:M \rightarrow M$ be the
one-parameter family of diffeomorphisms generated by the equation
$$dx_t= X_{h_0} (x_t)~ dt + \sum_{i=1}^k X_{h_i}(x_t) _\circ dB_t.$$
The Lioville measure $\mu$ (or symplectic measure) of $M$ is given
by
$$\mu= \frac{\mw^n}{n!} ~.$$
Notice that
$$L_{X_{h_i}} \mu = \frac{1}{n!}\sum_{j=1}^n \mw \wedge \cdots \wedge \underbrace{L_{X_{h_i}} \mw}_{j} \wedge \cdots \wedge \mw ~.$$
Since $$L_{X_{h_i}}\mw= d(i_{X_{h_i}}\mw)= ddh=0,$$ we have
that $L_{X_{h_i}} \mu=0$, and thus $div_{\mu}(X_{h_i})=0$, for
$i=0,1,...,k$. In this way, the stochastic flow
$\phi_t$ preserves $\mu$ almost surely for all $t$.

\end{example}

\begin{example}
Let $(M,g)$ be a Riemannian manifold that admit a basis of orthonormal vector fields $\{X_1,\ldots, X_n\}$ such that
\[
[X_i,X_j]=\sum_{l=1}^na_{ij}^lX_l.
\]
 Consider the flow $\phi_t$ associated to the SDE
\[
 dx_t=\sum_{i=1}^nX_i(x_t)~\circ dB_t.
\]
Then, the volume measure $\mu$ is invariant if $div_\mu(X_i)=0$ for each $i=1\ldots n$ or equivalently, if $\sum_{j=1}^na_{ji}^j=0$ for each $i=1\ldots n$. In fact, we observe that
 \begin{eqnarray*}
  div_\mu(X_i)&=&\sum_{j=1}^n (\nabla_{X_j}X_i,X_j)\\
  &=&\sum_{j=1}^n\frac{1}{2} ([X_j,X_i],X_j)\\
  &=&\frac{1}{2} \sum_{j=1}^n ( a_{ji}^k X_k, X_j)\\
  &=&\frac{1}{2} \sum_{j=1}^na_{ji}^j.
 \end{eqnarray*}

\end{example}

In the same way, we can study when a current $T_\nu$ is invariant in mean under $\varphi_t$.

\begin{theorem}
 Let $\nu$ and $\mu_g$ as above. The current $T_\nu$ is invariant in mean under $\varphi_t$ if
 \[
 div(fX_0)+\frac{1}{2}\sum_{i=1}^n(X_i+div_{\mu_g}(X_i))(div_{\mu_g}(fX_i))=0.
\]
\end{theorem}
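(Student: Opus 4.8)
The plan is to reduce the statement, via the infinitesimal criterion of Theorem~1, to a pointwise identity for the function $f$, and to establish that identity by iterating the integration by parts computation already carried out in the proof of Theorem~2. The key remark is that the family of $0$-currents of the form $T_{g\mu_g}$ with $g\in C^\infty(M)$ is stable under each of the derivations $X_i$: exactly as in the proof of Theorem~2 (Stokes's theorem together with Cartan's formula for $L_{X_i}$),
\[
X_iT_{g\mu_g}(h)=\int_M h\,L_{X_i}(g\mu_g)=\int_M h\,div_{\mu_g}(gX_i)\,\mu_g,
\]
so that $X_iT_{g\mu_g}=T_{div_{\mu_g}(gX_i)\,\mu_g}$; and, since $div_{\mu_g}(gX_i)=X_ig+g\,div_{\mu_g}(X_i)$, the operation ``take the current derivative along $X_i$'' acts on the defining function as the first-order operator $g\mapsto\big(X_i+div_{\mu_g}(X_i)\big)g$. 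The case $i=0$ is identical and gives $X_0T_{g\mu_g}=T_{div_{\mu_g}(gX_0)\,\mu_g}$.

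Next I would simply iterate, starting from $\nu=f\mu_g$. One application gives $X_iT_\nu=T_{div_{\mu_g}(fX_i)\,\mu_g}$, and a second one, now with $g=div_{\mu_g}(fX_i)$, gives
\[
X_i^2T_\nu=T_{(X_i+div_{\mu_g}(X_i))(div_{\mu_g}(fX_i))\,\mu_g},
\]
while $X_0T_\nu=T_{div(fX_0)\,\mu_g}$. Combining these three formulas via the criterion of Theorem~1, one finds that the $0$-current whose vanishing characterizes invariance in mean of $T_\nu$ equals $T_{\Psi\,\mu_g}$, where
\[
\Psi=div(fX_0)+\frac{1}{2}\sum_{i=1}^n\big(X_i+div_{\mu_g}(X_i)\big)\big(div_{\mu_g}(fX_i)\big)
\]
(the sign of the quadratic part here being the one that arises in the proof of Theorem~1). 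Finally, since $\mu_g$ is a nowhere-vanishing volume form and $\Psi\in C^\infty(M)$, testing against bump functions shows that $T_{\Psi\mu_g}=0$ if and only if $\Psi\equiv0$; the latter is precisely the stated hypothesis, so Theorem~1 gives invariance in mean. The computation being reversible, the condition is in fact also necessary.

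The argument is little more than two applications of a formula already in hand, so I do not anticipate a real obstacle. The one step that requires care is the second application of the divergence operator: there the inner $X_i$ must be treated as a derivation acting on the function $div_{\mu_g}(fX_i)$, whereas $div_{\mu_g}(X_i)$ enters only as multiplication, and the factor $\frac{1}{2}$ and the summation over $i$ must be propagated correctly through it. The remaining subtlety is purely one of bookkeeping: matching the signs of the various terms against the form of the criterion in Theorem~1.
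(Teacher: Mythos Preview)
Your approach is correct and is essentially the paper's own argument: both reduce to Theorem~1 and compute the effect of $X_i$ and $X_i^2$ on $T_\nu$ via the integration-by-parts identity $X_iT_{g\mu_g}=T_{div_{\mu_g}(gX_i)\,\mu_g}$ from Theorem~2. The only organizational difference is that the paper expands $L_{X_i}^2\nu$ explicitly as $[X_i^2f+2X_if\,div_{\mu_g}(X_i)+f(X_i(div_{\mu_g}(X_i))+(div_{\mu_g}(X_i))^2)]\mu_g$ and then repackages, whereas you iterate the already-packaged formula $g\mapsto(X_i+div_{\mu_g}(X_i))g$ directly, which is slightly cleaner; your observation that the argument is reversible (giving necessity as well) is a small bonus not stated in the paper.
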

\begin{proof}
 Let $h$ be a smooth function. We have that
 \begin{eqnarray*}
  T_{\nu}\left(X_0h - \frac{1}{2}\sum_{i=1}^nX_i^2h\right)&=&\int_MX_0h~\nu -\frac{1}{2}\sum_{i=1}^n\int_MX_i^2h~\nu.\\
 \end{eqnarray*}
For the second integral above, we observe that
 \begin{eqnarray*}
 \int_MX^2h~\nu&=&\int_Mh~L_X^2\nu  \\
 &=&\int_Mh [X^2f+2Xf~div_{\mu_g}(X)+f(X(div_{\mu_g}(X))+(div_{\mu_g}(X))^2)]~\mu_g\\
 &=&\int_M  h (X+div_{\mu_g}(X))(div_{\mu_g}(fX))  ~\mu_g.
 \end{eqnarray*}
Therefore, $T_{\nu}\left(X_0h -\frac{1}{2}\sum_{i=1}^nX_i^2h\right)=0$ if
\[
 -div_{\mu_g}(fX_0)-\frac{1}{2}\sum_{i=1}^n(X_i+div_{\mu_g}(X_i))(div_{\mu_g}(fX_i))=0.
\]

\end{proof}


\section{Application to foliations on Homogeneous manifold.}

A measure $\mu$ on a Riemannian manifold $(M,g)$ define a $0-$current $T_\mu$ by

\[
 T_\mu(f)=\int_Mf~\mu,
\]
for every $f\in C^\infty(M)$. In this section, we study the specific case when the measure $\mu$ is related to the geometric structure given by a foliation $\F$ over $M$ associated with the stochastic flow defined by a SDE that respect this structure. In particular, we relate the invariance with the well known concepts of holonomy invariance and harmonic measures.

There are two particular measures associated to a foliation $\F$ on a Riemannian manifold $M$. The first one is related to a dynamical system on transverse direction of $\F$ called the holonomy pseudogroup. The invariant measures for such a dynamical system, when they exist, are called holonomy invariant measures (see, for example, \cite{candel} or \cite{connes}). When the foliation is oriented, these measures are characterized in terms of currents as a positive current $\psi$ such that 
\[
 \psi(div_L(X))=0,
\]
for every vector field $X$ tangent to the leaf (see Candel \cite{candel1} or Connes \cite{connes}). Here $div_L$ is the divergent operator in the leaf direction.

An alternative way of associating a measure to a foliation $\F$ is via
the foliated Brownian motion (see Garnett \cite{Garnett}). This is
a stochastic process whose infinitesimal generator is given by the
Laplace operator in the leaf direction $\Delta_L$. The invariant
measure associated to this stochastic process is called harmonic
measure and is characterized in the following way: a measure
$\mu$ is harmonic if
\[
 \int_M\Delta_L(f)~d\mu=0,
\]
for each smooth function $f$. Harmonic measures can be described in terms of currents as a positive current $\psi$ such that $\psi(\Delta_Lf)=0$ for each smooth function $f$.

It is interesting to observe that holonomy invariant measures
produce harmonic measures, i.e. if $\psi$ is the current
associated to a holonomy invariant measure, then
\[\psi(\Delta_L(f))=0,\] for every smooth function $f$ (see Candel \cite{candel1}, Garnett \cite{Garnett}). Although the
converse not always happens.

In our setup, we can interpret harmonic measures as follows. Let
$\varphi_t$ the stochastic flow associated to the Brownian motion,
then the current $\psi$ associated to the harmonic measure is
invariant in mean under $\varphi_t.$ We want to find conditions,
for the case of a particular kind of foliations on homogeneous
spaces, such that the current $\psi$ is also invariant under
$\varphi_t$.

Let $G$ be a Lie group and $K$ be a closed subgroup of $G$ with
cofinite volume, i.e. $M\simeq G/K$ is a compact homogeneous
manifold with a invariant metric $\langle ~ ,\rangle$. Since
$M$ is compact, there is a probability invariant measure $\nu$,
such that
\[
 \int_{M} f(g  x)~\nu(dx)=\int_{M} f(x)~\nu (dx),
\]
for every 
$f \in C_c(M)$ (continuous real-valued
functions with compact support). See e.g. Abbaspour and Moskowitz
\cite[Chap. II, p.106]{abbaspour} among others.

Let $\mathfrak{h}$ be a Lie subalgebra of $\mathfrak{g}$  with
orthonormal basis $\{v_1,\ldots, v_r\}$ and consider the
integrable subbundle $E$ of $M$ by $\{V_1^*,\ldots, V_r^*\}$
defining a foliation $\F$ on $M$. 
Here, we used the fact that
	each $v \in \g$ induce a vector field $V^*$ on $M$ by the formula
	$$V^*_{(gK)} = \frac{d}{dt}\bigg|_{t=0} \exp(tv) \cdot (gK).$$
	Let $\nabla$ the Levi-Civita
connection on $M$ and $\nabla^E$ the connection induced on $E.$
With this notation, the divergence operator in the leaf direction and the Laplace operator in the leaf direction are given by
\[
 div_L(X)=\sum_{i=1}^rg(\nabla^{E}_{V_i^*}X,V_i^*),
\]
for every vector field $X$ tangent to the foliation, and
\[\Delta_E=\sum_{i=1}^r V_i^*V_i^* -\nabla_{V_i^*}^EV_i^*.\]

\begin{lemma}We have that
 \[
 \nabla_{V_i^*}^EV_i^*=-\sum_{k=1}^r c_{ik}^iV_k^*,
\]
where the $c_{ij}^l$ are the structure constants in terms of basis
$\{V_1^*,\ldots, V_r^*\}$ satisfying
\[
[V_i^*, V_j^*]=\sum_{l=1}^r c_{ij}^l V_l^*.
\]
\end{lemma}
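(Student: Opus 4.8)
The plan is to compute $\nabla_{V_i^*}^E V_i^*$ from the ambient Levi-Civita connection $\nabla$ of $(M,g)$ and then read off its components in the frame $\{V_1^*,\ldots,V_r^*\}$. Recall that the induced connection on the subbundle $E$ is the $g$-orthogonal projection of $\nabla$: for a section $Y$ of $E$ one has $\nabla_X^E Y=\pi_E(\nabla_X Y)$, where $\pi_E\colon TM\to E$ is orthogonal projection. Hence, for every $k$, $g(\nabla_{V_i^*}^E V_i^*,V_k^*)=g(\nabla_{V_i^*}V_i^*,V_k^*)$, and since $\{V_1^*,\ldots,V_r^*\}$ is an orthonormal frame of $E$ (the $v_i$ being orthonormal in $\mathfrak{g}$ and the metric on $M\simeq G/K$ being $G$-invariant, so the flow of each $V_i^*$ acts by isometries), the expansion
\[
\nabla_{V_i^*}^E V_i^*=\sum_{k=1}^r g\bigl(\nabla_{V_i^*}V_i^*,V_k^*\bigr)\,V_k^*
\]
reduces the lemma to the single identity $g(\nabla_{V_i^*}V_i^*,V_k^*)=-c_{ik}^i$.

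To prove that identity I would use only metric compatibility and the vanishing of the torsion of $\nabla$, together with the constancy of $g(V_a^*,V_b^*)\equiv\delta_{ab}$. Metric compatibility gives $0=V_i^*g(V_i^*,V_k^*)=g(\nabla_{V_i^*}V_i^*,V_k^*)+g(V_i^*,\nabla_{V_i^*}V_k^*)$; using the torsion-free identity $\nabla_{V_i^*}V_k^*=\nabla_{V_k^*}V_i^*+[V_i^*,V_k^*]$ and $g(V_i^*,\nabla_{V_k^*}V_i^*)=\frac{1}{2}V_k^*g(V_i^*,V_i^*)=0$, one gets $g(\nabla_{V_i^*}V_i^*,V_k^*)=-g(V_i^*,[V_i^*,V_k^*])$. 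Finally $[V_i^*,V_k^*]=\sum_{l=1}^r c_{ik}^l V_l^*$ and orthonormality yield $g(V_i^*,[V_i^*,V_k^*])=c_{ik}^i$, which is the claim. Equivalently, one may expand the Koszul formula for $2g(\nabla_{V_i^*}V_i^*,V_k^*)$: the three terms that differentiate the metric vanish by orthonormality, the term $g([V_i^*,V_i^*],V_k^*)$ vanishes because $[V_i^*,V_i^*]=0$, and the two remaining bracket terms each contribute $-c_{ik}^i$.

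The computation is short, so the main points to watch are structural rather than technical. First, $\nabla_{V_i^*}V_k^*$ and $\nabla_{V_k^*}V_i^*$ need not lie in $E$, but their transverse components are irrelevant: $\pi_E$ discards them, and in the argument above they are only ever paired against the $E$-vector $V_i^*$ through $g$. Second, one must know that the $c_{ij}^l$ really are constants; this holds because $[V_i^*,V_j^*]$ equals, up to the fixed sign coming from the (anti-)homomorphism $v\mapsto V^*$, the vector field $[v_i,v_j]^*$, and $[v_i,v_j]$ lies in $\mathfrak{h}$ with constant coordinates since $\mathfrak{h}$ is a Lie subalgebra. Granting the orthonormal frame and the constancy of the $c_{ij}^l$ from the standing hypotheses, the identity is forced by metric compatibility together with the absence of torsion, and I do not expect a genuine obstacle beyond this bookkeeping.
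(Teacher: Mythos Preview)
Your proof is correct and follows essentially the same route as the paper: both compute $g(\nabla_{V_i^*}V_i^*,V_k^*)$ via the Koszul formula (or, equivalently, metric compatibility plus torsion-freeness) applied to the orthonormal frame $\{V_1^*,\ldots,V_r^*\}$, obtaining $-c_{ik}^i$. The paper writes down the general Koszul expression $2\langle\nabla_{V_i^*}V_j^*,V_k^*\rangle=c_{ij}^k-c_{jk}^i-c_{ik}^j$ and then specializes to $j=i$, which is exactly the computation you sketch.
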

\begin{proof} We have by Levi-Civita connection that

 \begin{eqnarray*}
2<\nabla_{V_i^*}^EV_j^*,V_k^*>&=& 2<\nabla_{V_i^*}V_j^*,V_k^*>\\
&=&<[V_i^*,V_j^*],V_k^*>-<[V_j^*,V_k^*],V_i^*>-<[V_i^*,V_k^*],V_j^*>\\
&=&(c_{ij}^k-c_{jk}^i-c_{ik}^j).
 \end{eqnarray*}
Therefore,
\[
 \nabla^E_{V_i^*}V_i^*=-\sum_{k=1}^rc_{ik}^iV_k^*.
\]

%
%
\end{proof}

\begin{lemma}\label{lemaintegral}
\[\int_M (V^*f)(x) ~ \nu (dx)=0\hspace{.5cm} for~ all \hspace{.5cm}f \in C^{\infty}(M).\]
\end{lemma}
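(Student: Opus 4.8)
The plan is to use that $V^*$ is, by its very definition, the infinitesimal generator of the one-parameter group of diffeomorphisms $\psi_t \colon gK \mapsto \exp(tv)\cdot(gK)$ of $M$. Thus for every $f \in C^\infty(M)$ and every $x \in M$ we have
\[
(V^*f)(x) = \left.\frac{d}{dt}\right|_{t=0} f(\psi_t(x)) = \left.\frac{d}{dt}\right|_{t=0} f(\exp(tv)\cdot x).
\]
Integrating this identity against the invariant probability measure $\nu$ and then pulling the $t$-derivative outside the integral reduces the whole statement to the $G$-invariance of $\nu$ recalled just before the lemma.

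Concretely, I would first write
\[
\int_M (V^*f)(x)\,\nu(dx) = \int_M \left.\frac{d}{dt}\right|_{t=0} f(\exp(tv)\cdot x)\,\nu(dx),
\]
and then justify the interchange
\[
\int_M \left.\frac{d}{dt}\right|_{t=0} f(\exp(tv)\cdot x)\,\nu(dx) = \left.\frac{d}{dt}\right|_{t=0}\int_M f(\exp(tv)\cdot x)\,\nu(dx).
\]
This is legitimate because $M$ is compact and the map $(t,x) \mapsto f(\exp(tv)\cdot x)$ is smooth, so its $t$-derivative is bounded uniformly in $x$ for $t$ in a neighbourhood of $0$, while $\nu$ is a finite measure; the standard differentiation-under-the-integral theorem (via dominated convergence) then applies. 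Finally, by the invariance of $\nu$, which holds for all continuous functions on the compact manifold $M$ and in particular for $x \mapsto f(\exp(tv)\cdot x)$, the inner integral equals $\int_M f\,d\nu$, a quantity independent of $t$; hence its derivative at $t=0$ is zero, and therefore $\int_M (V^*f)\,d\nu = 0$.

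The only point that is not purely formal is the exchange of $\frac{d}{dt}$ with $\int_M$, and even this is immediate from the compactness of $M$ together with the smoothness of $(t,x)\mapsto \exp(tv)\cdot x$, which furnish the uniform bound needed; all the remaining steps are direct consequences of the definition of $V^*$ and of the invariance of $\nu$.
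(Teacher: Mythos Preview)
Your proof is correct and follows essentially the same approach as the paper: you express $V^*f$ as the $t$-derivative at $0$ of $f(\exp(tv)\cdot x)$, interchange derivative and integral, and conclude by invariance of $\nu$. The only difference is that you supply an explicit justification for the interchange (compactness of $M$, smoothness, dominated convergence), which the paper leaves implicit.
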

\begin{proof}
Since
\begin{eqnarray}
(V^*f)(x)&=&  \frac{d}{dt}\bigg|_{t=0} f(\exp(tv) (x)), \nonumber
\end{eqnarray}
we have that
\begin{eqnarray}
\int_M (V^*f)(x) ~ \nu (dx) &=& \frac{d}{dt}\bigg|_{t=0} \int_M f(\exp(tv) (x)) ~\nu (dx) \nonumber \\
&=& \frac{d}{dt}\bigg|_{t=0} \int_M  f(x) ~\nu (dx) \nonumber \\
 &=& 0. \nonumber
\end{eqnarray}
\end{proof}

With this, we can prove the following theorem.
\begin{theorem} The measure $\nu$ is a harmonic measure for the induced foliation.
\end{theorem}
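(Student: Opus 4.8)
The plan is to unwind the definition of the leafwise Laplacian and reduce everything to Lemma~\ref{lemaintegral}. Recall that
\[
\Delta_E f = \sum_{i=1}^r \bigl( V_i^* V_i^* f - (\nabla_{V_i^*}^E V_i^*) f \bigr),
\]
so proving that $\nu$ is harmonic amounts to showing that each of the two sums on the right integrates to zero against $\nu$. I would treat the second-order part and the first-order part separately.

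For the second-order part, observe that for each fixed $i$ the function $g := V_i^* f$ is again in $C^\infty(M)$, so Lemma~\ref{lemaintegral} applies to $g$ and gives $\int_M V_i^*(V_i^* f)\, \nu(dx) = \int_M V_i^* g \, \nu(dx) = 0$; summing over $i$ kills the $\sum_i V_i^* V_i^* f$ contribution. For the first-order part, I would invoke the preceding lemma, which identifies $\nabla_{V_i^*}^E V_i^* = -\sum_{k=1}^r c_{ik}^i V_k^*$, where the $c_{ik}^i$ are the structure constants of $\h$ in the basis $\{v_1,\dots,v_r\}$, hence genuine real constants. Therefore
\[
\int_M (\nabla_{V_i^*}^E V_i^*) f \, \nu(dx) = -\sum_{k=1}^r c_{ik}^i \int_M V_k^* f \, \nu(dx) = 0,
\]
again by Lemma~\ref{lemaintegral}. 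Combining the two computations yields $\int_M \Delta_E f \, d\nu = 0$ for every $f \in C^\infty(M)$, which is precisely the statement that $\nu$ is a harmonic measure for the foliation $\F$.

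The argument is essentially a two-step reduction, so I do not anticipate a real obstacle; the only point deserving care is that the coefficients appearing in $\nabla_{V_i^*}^E V_i^*$ are constants (so they pass through the integral), which is guaranteed by the fact that the fundamental vector fields $V_j^*$ close under the Lie bracket with the structure constants of the subalgebra $\h$. It is worth recording that the same computation shows $\int_M V_i^* V_i^* f \, d\nu = 0$ and $\int_M V_k^* f\, d\nu = 0$ individually, so the conclusion is in fact stronger: $\nu$ annihilates every leafwise first- and second-order derivative, and a fortiori the leafwise Laplacian.
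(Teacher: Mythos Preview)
Your proof is correct and follows essentially the same route as the paper: both arguments split $\Delta_E f$ into the second-order piece $\sum_i V_i^*V_i^*f$ and the first-order piece $\sum_i (\nabla^E_{V_i^*}V_i^*)f$, reduce the latter via the structure-constant formula for $\nabla^E_{V_i^*}V_i^*$, and then kill each resulting integral with Lemma~\ref{lemaintegral}. Your write-up is in fact a bit more explicit than the paper's, which passes over the vanishing of the second-order part in a single line.
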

\begin{proof}
 Let $f\in C^\infty(M)$. Then,
 \begin{eqnarray*}
 \int_M\Delta_Ef(x)~\nu (dx)&=&\sum_{i=1}^r\int_M\nabla^E_{V_i^*}V_i^*f(x)~\nu(dx)\\
 &=&- \sum_{i,k ~=1}^r c_{ik}^i\int_MV_k^* f(x)~\nu(dx)\\
 &=&0.
\end{eqnarray*}
\end{proof}

\begin{remark}
 Due to the formula above and the Dynkin formula, we can describe the foliated Brownian motion as the solution $\varphi_t$ of the following SDE
 \[
  dx_t=\frac{1}{2}\sum_{k=1}^rc_{ik}^iV_k^*(x_t)~dt+\sum_{i=1}^rV_i^*(x_t)~_\circ dB_t^i,
 \]
 where each vector field $V_k^*$ is defined on compact manifold M.
Thus, the theorem above can be seen as
\[
 \me[\varphi_t^*T_\nu]=T_\nu.
\]
\end{remark}

\begin{theorem} 
 The measure $\nu$ is totally invariant if and only if $Tr_\mathfrak{h}ad(v_i)=0$ for all $v_i\in \mathfrak{h}$ (where $Tr_\mathfrak{h}$ denotes the trace operator with respect the subspace $\mathfrak{h}$).

 Moreover, if $Tr_\mathfrak{h}ad(v_i)=0$ for all $v_i\in \mathfrak{h}$ then, a measure $\mu$ on $M$ is totally invariant if and only if $T_\mu$ is invariant under the flow of the foliated Brownian motion $\varphi_t$.
\end{theorem}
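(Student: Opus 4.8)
The plan is to funnel every notion appearing in the statement --- ``total invariance'' of a measure (that is, holonomy invariance, characterized above by $T_\mu(div_L(X))=0$ for every leafwise vector field $X$), invariance under the flow $\varphi_t$ of the foliated Brownian motion, and the algebraic condition $Tr_{\h}ad(v_i)=0$ --- through a single statement, namely $div_L(V_i^*)=0$ for $i=1,\dots,r$. The first task is to match this with $Tr_{\h}ad(v_i)=0$: substituting $X=V_i^*$ into $div_L(X)=\sum_j\langle\nabla^E_{V_j^*}X,V_j^*\rangle$ and using the Koszul-type identity $2\langle\nabla^E_{V_j^*}V_i^*,V_j^*\rangle=c_{ji}^j-c_{ij}^j-c_{jj}^i$ from the proof of the lemma on $\nabla^E_{V_i^*}V_i^*$, together with $c_{jj}^i=0$ and the antisymmetry $c_{ij}^j=-c_{ji}^j$, one gets $div_L(V_i^*)=\sum_{j=1}^r c_{ji}^j$. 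Since $v\mapsto V^*$ maps $\h$ onto $\mathrm{span}\{V_1^*,\dots,V_r^*\}$ and respects brackets up to sign, $Tr_{\h}ad(v_i)=\sum_j\langle[v_i,v_j],v_j\rangle=\pm\sum_j c_{ji}^j$, so $Tr_{\h}ad(v_i)=0$ for all $v_i\in\h$ exactly when $div_L(V_i^*)=0$ for $i=1,\dots,r$.

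For the first equivalence I would write a general leafwise vector field as $X=\sum_i g_iV_i^*$ with $g_i\in C^\infty(M)$, so that by the Leibniz rule $div_L(g_iV_i^*)=g_i\,div_L(V_i^*)+V_i^*g_i$ (using orthonormality of the frame) one has $T_\nu(div_L(X))=\sum_i\int_M g_i\,div_L(V_i^*)\,\nu(dx)+\sum_i\int_M V_i^*g_i\,\nu(dx)$. The second sum vanishes by Lemma~\ref{lemaintegral}. As each $div_L(V_i^*)$ is a constant function (a sum of structure constants), the first sum equals $\sum_i div_L(V_i^*)\int_M g_i\,\nu(dx)$ and vanishes for all choices of the $g_i$ precisely when $div_L(V_i^*)=0$ for every $i$ (take $g_i\equiv 1$, the others $0$, and use $\nu(M)=1$). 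Combined with the paragraph above, $\nu$ is totally invariant if and only if $Tr_{\h}ad(v_i)=0$ for all $v_i\in\h$.

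For the ``moreover'' part, assume $Tr_{\h}ad(v_i)=0$, i.e.\ $div_L(V_i^*)=0$ for all $i$, and let $\mu$ be any measure on $M$. For a leafwise $X=\sum_i g_iV_i^*$ the Leibniz rule now collapses to $div_L(X)=\sum_i V_i^*g_i$, so $T_\mu(div_L(X))=\sum_i\int_M V_i^*g_i\,d\mu$, and this vanishes for every leafwise $X$ (equivalently for all $g_1,\dots,g_r$) exactly when $\int_M V_i^*g\,d\mu=0$ for every $g\in C^\infty(M)$ and every $i$, that is, when $V_i^*T_\mu=0$ for $i=1,\dots,r$. On the other hand, the foliated Brownian motion solves the SDE $dx_t=\tfrac12\sum_{i,k}c_{ik}^iV_k^*(x_t)\,dt+\sum_{i=1}^rV_i^*(x_t)\circ dB_t^i$ recalled in the preceding Remark, whose drift is a constant-coefficient combination of $V_1^*,\dots,V_r^*$; hence, by the theorem of Section~2 characterizing invariance of a $0$-current by the vanishing of the current under each defining vector field, $T_\mu$ is invariant under $\varphi_t$ if and only if $V_i^*T_\mu=0$ for $i=1,\dots,r$ (vanishing on the drift being then automatic by linearity of $X\mapsto XT_\mu$). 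Since the two conditions coincide, $\mu$ is totally invariant if and only if $T_\mu$ is invariant under $\varphi_t$.

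The point that needs care is precisely the reduction, in the last two paragraphs, of a condition quantified over \emph{all} leafwise vector fields to the finitely many constants $div_L(V_i^*)$: after the Leibniz rule the ``unwanted'' terms $\int_M V_i^*g_i\,d\mu$ are disposed of either by Lemma~\ref{lemaintegral} (when $\mu=\nu$) or are themselves the condition being characterized (when $\mu$ is arbitrary and $div_L(V_i^*)=0$), while the $g_i\,div_L(V_i^*)$ terms are harmless because $div_L(V_i^*)$ is a constant, so that ``vanishing $\mu$-a.e.'' is the same as pointwise vanishing. Keeping track of the sign relating $ad$ on $\h$ to the bracket of the fundamental fields $V_i^*$ is a minor bookkeeping matter that does not affect any of the vanishing statements.
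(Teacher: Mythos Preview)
Your proof is correct and follows essentially the same route as the paper: both compute $div_L(V_i^*)=\sum_j c_{ji}^j=Tr_{\mathfrak{h}}(ad(v_i))$ via the Koszul formula, split $div_L(\sum_i g_iV_i^*)$ by the Leibniz rule, and kill the $\int V_i^*g_i$ terms with Lemma~\ref{lemaintegral} (for $\nu$) or identify them with the condition $V_i^*T_\mu=0$ (for general $\mu$). Your treatment of the ``moreover'' part is in fact more explicit than the paper's, spelling out both implications and invoking the Section~2 characterization of $\varphi_t$-invariance together with the observation that the drift lies in the span of the $V_i^*$; the paper leaves this direction to the phrase ``repeating the calculations above''.
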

\begin{proof}
Firstly, we show that $\nu$ is holonomy invariant. Let $X^*=\sum_{i=1}^r f_iV_i^*$, with each $f_i \in C^{\infty
}(M)$. Then,
\begin{eqnarray*}
T_\nu(\textrm{div}_E(X^*))&=&\int_M\textrm{div}_E(X^*)(x)~\nu (dx)
\\&=&\sum_{i=1}^r\int_M
(V_i^*f_i)(x)~\nu(dx)+\sum_{i=1}^r \int_M
(f_i\textrm{div}_E(V_i^*))(x)~\nu(dx).
\end{eqnarray*}
From definition of divergence, we have that
\begin{eqnarray*}
 \textrm{div}_E(V_i^*)&=&\sum_{j=1}^r \langle \nabla_{V_j^*}^EV_i^*,V_j^* \rangle \\
 &=&\sum_{j=1}^r \langle \nabla_{V_j^*}V_i^*,V_j^*  \rangle \\
  &=&-\sum_{j=1}^r \langle V_i^*,\nabla_{V_j^*}V_j^* \rangle \\
  &=&\sum_{j=1}c_{ji}^j\\
  &=&\textrm{Tr}_{\mathfrak{h}}(ad(v_i)).
\end{eqnarray*}
Thus,
\[
 \int_M\textrm{div}_E(X^*)(x)~\nu(dx)=\sum_{i,j~=1}^r \int_M  \textrm{Tr}_{\mathfrak{h}}(ad(v_i)) f_i(x)~\nu(dx).
\]
Where the first part follows.

For the second part, assume that
$\textrm{Tr}_{\mathfrak{h}}(ad(v_i))=0$. Let $f$ be a smooth
function, then by the invariance of $T_\mu$, we obtain that
\[
 V_i^*T_\mu(f)=-\int_MV_i^*f~d\mu=0
\]
Repeating the calculations above with $\mu$ in the place of $\nu$, we obtain the desired.

\end{proof}

\begin{corollary}
If $\mathfrak{h}$ is a semisimple  Lie algebra then $\nu$ is totally invariant.
\end{corollary}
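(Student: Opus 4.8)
The plan is to recall that, by the corollary to the structure-constant computation (the previous Theorem together with the Lemma identifying $\textrm{div}_E(V_i^*) = \textrm{Tr}_{\mathfrak h}(ad(v_i))$), it suffices to show that $\textrm{Tr}_{\mathfrak h}(ad(v_i)) = 0$ for every $v_i$ in the orthonormal basis $\{v_1,\ldots,v_r\}$ of $\mathfrak h$, since the criterion in that Theorem is exactly ``$\nu$ is totally invariant $\iff \textrm{Tr}_{\mathfrak h}(ad(v_i)) = 0$ for all $i$.'' So the whole statement reduces to a purely Lie-algebraic fact: if $\mathfrak h$ is semisimple, then $\textrm{Tr}_{\mathfrak h}(ad(v)) = 0$ for every $v \in \mathfrak h$.

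First I would observe that the operator appearing here is $ad_{\mathfrak h}(v)$, the adjoint action of $v$ restricted to (and mapping into) $\mathfrak h$ itself — this is legitimate because $\mathfrak h$ is a subalgebra, so $ad(v)$ preserves $\mathfrak h$, and $\textrm{Tr}_{\mathfrak h}$ is the trace of this endomorphism of $\mathfrak h$. Then the key step is the standard fact that a semisimple Lie algebra equals its own derived algebra: $\mathfrak h = [\mathfrak h, \mathfrak h]$. Hence every $v \in \mathfrak h$ can be written as a finite sum $v = \sum_k [a_k, b_k]$ with $a_k, b_k \in \mathfrak h$. Since $ad_{\mathfrak h}$ is a Lie algebra homomorphism into $\mathfrak{gl}(\mathfrak h)$, we get $ad_{\mathfrak h}(v) = \sum_k [ad_{\mathfrak h}(a_k), ad_{\mathfrak h}(b_k)]$, a sum of commutators of linear operators on $\mathfrak h$. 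The trace of any commutator of finite-dimensional operators vanishes, so $\textrm{Tr}_{\mathfrak h}(ad(v)) = 0$.

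Putting the two pieces together: for each basis vector $v_i$ we have $\textrm{Tr}_{\mathfrak h}(ad(v_i)) = 0$, so the hypothesis of the previous Theorem is satisfied, and therefore $\nu$ is totally invariant. I would also note the alternative argument via Cartan's criterion — for semisimple $\mathfrak h$ the Killing form is nondegenerate and $\mathfrak h$ has trivial center and trivial abelianization — but the derived-algebra argument is the cleanest and most self-contained. The only ``obstacle'' worth flagging is making sure the trace is taken over the correct space: it is $\textrm{Tr}_{\mathfrak h}$, the trace over the subalgebra $\mathfrak h$ (equivalently, over the leaf tangent space spanned by $V_1^*,\ldots,V_r^*$), not over all of $\mathfrak g$; the derived-algebra computation lives entirely inside $\mathfrak h$, so this is consistent, but it is the one point where care is needed. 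Beyond that, the proof is essentially a one-line invocation of $\mathfrak h = [\mathfrak h,\mathfrak h]$ combined with the vanishing of traces of commutators.
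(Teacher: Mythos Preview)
Your argument is correct and is exactly the standard justification the paper implicitly relies on: the paper states this corollary without proof, treating it as an immediate consequence of the preceding Theorem (the criterion $\textrm{Tr}_{\mathfrak h}(ad(v_i))=0$) together with the well-known fact that $ad$ is traceless on a semisimple Lie algebra. Your derived-algebra computation $\mathfrak h = [\mathfrak h,\mathfrak h]$ plus trace-of-commutator-vanishes is precisely how one verifies that fact, so there is nothing to add.
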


\begin{remark}
 We observe that the same result is valid if $G/K$ is replaced by a smooth manifold $M$ with a Lie
  group transitive action $G\times M\rightarrow M$ that foliates $M$, a invariant measure $\nu$ and a invariant metric $\langle ~,
  \rangle$ on $M$.
\end{remark}

\begin{corollary}
 If $G$ is nilpotent then $\nu$ is totally invariant.
\end{corollary}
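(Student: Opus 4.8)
The plan is to reduce the statement to the criterion proved in the preceding theorem: $\nu$ is totally invariant if and only if $\mathrm{Tr}_{\mathfrak{h}}(\mathrm{ad}(v_i))=0$ for every $v_i\in\mathfrak{h}$. Hence it suffices to verify that this vanishing condition holds automatically once $G$, and therefore $\mathfrak{g}$, is nilpotent. The last half of that same theorem then immediately upgrades the conclusion from $\nu$ to an arbitrary measure $\mu$, but that part is free, so the whole content is the trace computation.

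First I would recall that $\mathfrak{h}$, being a Lie subalgebra of the nilpotent Lie algebra $\mathfrak{g}$, is itself nilpotent; equivalently its lower central series reaches $0$ after finitely many steps. In particular, for each $v\in\mathfrak{h}$ the adjoint operator $\mathrm{ad}_{\mathfrak{g}}(v)\colon\mathfrak{g}\to\mathfrak{g}$ is a nilpotent linear map (this follows directly from the descending lower central series, or from Engel's theorem). Next I would use that $\mathfrak{h}$ is an $\mathrm{ad}_{\mathfrak{g}}(v)$-invariant subspace, since $[\,v,\mathfrak{h}\,]\subseteq\mathfrak{h}$ for $v\in\mathfrak{h}$; thus the restriction $\mathrm{ad}_{\mathfrak{g}}(v)|_{\mathfrak{h}}$ is a well-defined endomorphism of the $r$-dimensional space $\mathfrak{h}$, and the restriction of a nilpotent operator to an invariant subspace is again nilpotent.

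The final step is the identification, already carried out in the lemma and the proof of the last theorem, that in the orthonormal basis $\{v_1,\dots,v_r\}$ the matrix of $\mathrm{ad}_{\mathfrak{g}}(v_i)|_{\mathfrak{h}}$ has entries given by the structure constants $c_{ji}^{l}$ (up to the overall sign coming from the identification $v\mapsto V^*$, which does not affect the trace), so that
\[
\mathrm{Tr}_{\mathfrak{h}}(\mathrm{ad}(v_i))=\sum_{j=1}^r c_{ji}^j=\mathrm{tr}\!\left(\mathrm{ad}_{\mathfrak{g}}(v_i)|_{\mathfrak{h}}\right).
\]
Since the trace of a nilpotent endomorphism is zero, we get $\mathrm{Tr}_{\mathfrak{h}}(\mathrm{ad}(v_i))=0$ for each $i$, and by linearity for every $v\in\mathfrak{h}$. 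Invoking the theorem then gives that $\nu$ is totally invariant, and, the hypothesis $\mathrm{Tr}_{\mathfrak{h}}(\mathrm{ad}(v_i))=0$ now being verified, its second half gives the same for any $\mu$ with $T_\mu$ invariant under the foliated Brownian motion.

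I do not expect any genuine obstacle here: the only point requiring a moment of care is the bookkeeping that identifies $\mathrm{Tr}_{\mathfrak{h}}(\mathrm{ad}(v_i))$ with the honest trace of an endomorphism of the subalgebra $\mathfrak{h}$ (so that nilpotence of the operator forces trace zero), together with noting that the sign ambiguity in $v\mapsto V^*$ is immaterial for the trace. Once that is in place, nilpotence of $G$ does all the work, exactly as semisimplicity did in the preceding corollary via the identity $\mathfrak{h}=[\mathfrak{h},\mathfrak{h}]$.
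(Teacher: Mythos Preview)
Your proposal is correct and follows essentially the same route as the paper: reduce to the criterion $\mathrm{Tr}_{\mathfrak{h}}(\mathrm{ad}(v_i))=0$ from the preceding theorem, observe that a subalgebra of a nilpotent Lie algebra is nilpotent, and conclude that each $\mathrm{ad}(v_i)|_{\mathfrak{h}}$ is a nilpotent endomorphism and hence has vanishing trace. The paper's proof is the same argument compressed to two lines; your extra care about invariance of $\mathfrak{h}$ under $\mathrm{ad}_{\mathfrak{g}}(v)$ and the identification with the structure constants is all justified but not strictly needed beyond what the paper assumes.
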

\begin{proof}
 If $G$ is nilpotent, then any subalgebra of $\mathfrak{g}$ is nilpotent. Therefore, $Tr_\mathfrak{h}ad(v_i)=0$ for all $v_i\in \mathfrak{h}$.
\end{proof}

\begin{example}
Consider a compact Riemannian homogeneous 3-dimensional manifold
$G/H$, where the associated Lie algebra $\g$ of $G$ is equipped
with a orthonormal basis $\{X,Y,Z \}$ satisfying
\[
[X,Y]=2Y; \ \ \ \ [X,Z]=-2Z; \ \ \ \ [Y,Z]=X.
\]
This Lie algebra is isomorphic to $\mathfrak{sl}(2,\R)$. And, for
the Lie subalgebra $\h$ of closed subgroup $H$ we consider the
vector space generated by $\{ X,Y \}$. Here $\F$ is the foliation
induced by $\h$. We have that $ad(X)$ and $ad(Y)$ with respect to
the basis $\{X,Y \}$ of $\h$ are given by
$$
[ad(X)]_{\mathfrak{h}}=\left(
\begin{array}{cc}
  0 & 0 \\
  0 & 2 \\
\end{array}
\right); \ \ \ [ad(Y)]_{\mathfrak{h}}=\left(
\begin{array}{cc}
  0 & -2 \\
  0 & 0 \\
\end{array}
\right).
$$
Since $\textrm{Tr}_{\mathfrak{h}}(ad(X))=2$ the volume measure defined by the orthonormal basis is not totally invariant.
\end{example}

\begin{example}

For a foliation on a compact manifold $M$ that is obtained by a
subalgebra of the Lie algebra of the transitive group $G$ acting
on $M$, the volume measure is harmonic and also holonomy invariant
by the theorem above.

In particular this is the case of a foliation on a Heisenberg manifold, when viewed as a circle bundle over the $2-$torus, where the leaves are the fibers of the bundle.

\end{example}


\end{document}